\title{Subespacios densos de $C[0,1]$. Teoremas de Stone-Weierstrass y de Müntz-Szász.}
\author{Mario Pérez Maletzki}
\date{2021}
\newtheorem{teo}{Teorema}[section]
\newtheorem{prop}[teo]{Proposición}
\newtheorem{cor}[teo]{Corolario}
\newtheorem{lema}[teo]{Lema}
\begin{document}
	\maketitle	
	\begin{abstract}
	En este trabajo investigamos algunos resultados acerca de los subespacios densos de $C[0,1]$. Tomando como punto de partida el Teorema de Aproximación de Weierstrass, estudiamos generalizaciones de este en dos direcciones: la primera considerando subespacios dotados con estructura de álgebra, donde el resultado principal será el Teorema de Stone-Weierstrass, y la segunda considerando subespacios generados por conjuntos de monomios cuyos exponentes cumplan ciertas propiedades, donde el resultado central será el Teorema de Müntz-Szász. Finalmente recopilamos los avances más recientes en este tema.
	\end{abstract}
	\begin{abstract}
	In this work, we look into some results about dense subspaces of $C[0,1]$. Being our starting point the Weierstrass' Approximation Theorem, we study generalization of this in two directions: the first one studying subspaces which also have algebra structure, where the main result will be the Stone-Weierstrass theorem, and the second one will be considering subspaces generated by sets of monomials whose exponents satisfy certain properties, where the central result will be the Müntz-Szász theorem. Finally, we gather some of the most recent advances from this topic.

	\end{abstract}
\section{Introducción}

El Teorema de Aproximación de Weierstrass es un resultado de gran importancia en la Teoría de Aproximación. Fue descubierto por Karl Weierstrass en 1885 y afirma que para cada función continua $f:[a,b]\rightarrow\mathbb{C}$ y cada $\varepsilon>0$, existe un polinomio $ p_\varepsilon:\mathbb{R}\rightarrow \mathbb{C}$ tal que $|f(x)-p_\varepsilon(x)|<\varepsilon$ para cada $x\in [a,b]$. 

Este teorema resulta de gran interés tanto práctico como teórico, y no es de extrañar que muchos matemáticos hayan tratado de generalizar este resultado posteriormente. La generalización más conocida es debida al matemático Marshall H. Stone en 1937, quien caracterizó qué subálgebras de funciones continuas son densas en el álgebra de funciones continuas en un espacio compacto arbitrario.

Otra forma de formular el Teorema de Weierstrass cuando $[a,b]=[0,1]$, es que $span\{1,x,x^2,...\}$ es denso en $C([0,1],\mathbb{C})$, donde $span\{1,x,x^2,...\}$ se refiere al espacio vectorial generado por dichos monomios. Con esta formulación del teorema puede resultar natural preguntarse si podemos suprimir algún monomio en $\{1,x,x^2,...\}$ de forma que el espacio vectorial que genere siga siendo denso en $C([0,1],\mathbb{C})$, o de forma más general, preguntarnos qué condiciones tienen que cumplir los exponentes de un conjunto de monomios para que generen un espacio vectorial denso en $C([0,1],\mathbb{C})$.

En 1912 Sergei Bernstein conjeturó que una condición suficiente y necesaria para que una sucesión creciente  $\{\lambda_i\}_{i=0}^\infty$ de números reales  que tiendan a infinito cumpla que $span\{x^{\lambda_i}\mid i=0,1,2...\}$ sea denso en $C([0,1],\mathbb{C})$, es que: $$\sum_{n=0}^\infty  \frac{1}{\lambda_n}=\infty.$$

Dos años más tarde esta conjetura fue probada por el matemático Herman Müntz, y en 1916 Otto Szász dio una demostración simplificada de la de Müntz.

\underline{Notación:} 	Dado un espacio topológico $X$, denotamos por $C(X)$ al espacio de las funciones reales y continuas en $X$, mientras que cuando consideremos las funciones complejas y continuas lo especificaremos usando la notación $C(X,\mathbb{C})$.
Como es habitual denotaremos por $L^2[0,1]$ al espacio de funciones medibles  $f:[0,1]\rightarrow\mathbb{C}$ tales que $$\int_{0}^{1} |f(x)|^2\ dx <\infty.$$

\section{Teorema de Stone-Weierstrass}

En esta sección estdiamos el Teorema de Stone-Weierstrass y como quedan caracterizadas aquellas subálgebras densas en $C[0,1]$, obteniendo como un simple corolario el Teorema de Weierstrass. 

\begin{lema}
	
	Existe una sucesión $(p_n)_{n=0}^\infty$ de polinomios $p_n:\mathbb{R}\rightarrow \mathbb{R}$ cumpliendo $p_n(0)=0$ que converge uniformemente a la función $f(t)=\sqrt t$ en $[0,1]$.
	
\end{lema}

\begin{proof}[Demostración]
	
	Definimos recursivamente los polinomios $p_n$ de la siguiente forma: 	
	\begin{equation}\label{def}	
	p_0(t):=0, \quad p_{n+1}(t):=p_n(t)+ \frac1 2 (t-p^2_n(t)).
	\end{equation}	
	
	Para cada $n\geq0$ es elemental que se cumple la siguiente igualdad: 
	\begin{equation}\label{igualdad}
	\sqrt t -p_{n+1}(t) = (\sqrt t - p_n(t))(1-\frac1 2(\sqrt{t}+p_n(t))).
	\end{equation}
	
	\noindent Probaremos por inducción sobre $n\in \mathbb{N}$ las siguientes afirmaciones: 
	\begin{equation}\label{af1}
	p_n(t)\geq 0, \quad p_n(0)=0, 
	\end{equation}
	
	\begin{equation}\label{af2}
	0\leq \sqrt t - p_n(t) \leq \frac{2\sqrt t}{2+n\sqrt t} \ \ \mbox{para todo } t\in[0,1].
	\end{equation}
	
	Una vez probadas, puesto que $\frac{2\sqrt t}{2+n\sqrt t}\leq \frac{2}{n}$ para cada $t\in [0,1]$,  tomando supremos en $t\in [0,1]$ tendremos que: \begin{equation}\label{af3}
	\sup_{t\in [0,1]} | \sqrt  t - p_n(t) |\leq \frac2 n,
	\end{equation} de donde deduciremos que $p_n(t)$ converge uniformemente a $\sqrt t$ en $[0,1]$.
	\begin{itemize}
		\item El caso $n=0$ está claro, pues $p_0\equiv 0$.
		\item Suponiendo que para $n$ las afirmaciones son ciertas, veamos que también lo son para $n+1$: 
	\end{itemize}

	De (\ref{def}) se deduce trivialmente que $p_{n+1}(0)=0$.
	
	Por hipótesis de inducción tenemos que $p_n(t)\geq 0$ y $0\leq \sqrt t - p_n(t)$ para cada $t\in [0,1]$, por lo que: $$p_{n+1}(t)=p_n(t)+ \frac1 2 (t-p^2_n(t))=p_n(t)+ \frac1 2 (\sqrt{t}-p_n(t))(\sqrt{t}+p_n(t))\geq 0.$$

	Por (\ref{igualdad}) y por la hipótesis de inducción tenemos que para $0\leq t \leq1$: $$ \sqrt t -p_{n+1}(t)= (\sqrt t - p_n(t))(1-\frac1 2(\sqrt{t}+p_n(t)))\geq (\sqrt t - p_n(t))(1-\sqrt{t}) \geq 0,$$

	$$\sqrt t -p_{n+1}(t)= (\sqrt t - p_n(t))(1-\frac1 2(\sqrt{t}+p_n(t)))\leq \frac{2\sqrt t}{2+n\sqrt t}\cdot \frac1 2 (2-\sqrt t -p_n(t))$$ $$\leq \frac{2\sqrt t}{2+n\sqrt t}\cdot \frac1 2 (2-\sqrt t)\leq \frac{2\sqrt t}{2+(n+1)\sqrt t}.$$

	Por tanto, queda probado el lema.

\end{proof}
\begin{lema}\label{vabs}
	
	Dado $a>0$, existe una sucesión $(q_n)_{n=0}^\infty$ de polinomios $q_n:\mathbb{R}\rightarrow \mathbb{R}$ con $q_n(0)=0$ que converge uniformemente a $f(t)= |t|$ en $[-a,a]$.
	
\end{lema}

\begin{proof}[Demostración]
	
	Para cada $n\geq0$ consideramos el polinomio $p_n$ definido en (\ref{def}) y definimos: $$q_n(t):=a\cdot p_n\left(\frac{t^2}{a^2}\right).$$ 
	
	Se tiene que para cada $t\in [-a,a]$: $$| |t|-q_n(t)|=\left|a\sqrt{\frac{t^2}{a^2}}- q_n(t)\right| =a\left| \sqrt{\frac{t^2}{a^2}}- p_n\left(\frac{t^2}{a^2}\right)\right|\leq \frac{2a}{n}$$ siendo la última desigualdad consecuencia de (\ref{af3}).
	
	\noindent De aquí, queda probado que $q_n(t)$ converge uniformemente a $|t|$ en $[-a,a]$.
	
\end{proof}

	Con las operaciones $f+\lambda g:= t\mapsto f(t)+\lambda g(t)$ y $f\cdot g:= t\mapsto f(t)g(t)$ tenemos que $C(X)$ es un álgebra.

	Si $D\subseteq C(X)$, denotaremos por $A(D)$ al álgebra generada por $D$, es decir, la menor subálgebra de $C(X)$ que contiene a $D$.
	
	Es bien sabido que $A(D)$ coincide con el siguiente conjunto: $$ \left\{\sum_{0\leq v_1,\ldots,v_r\leq n} a_{v_1\ldots v_r}d_1^{v_1}\ldots d_r^{v_r} \mid r,n\in\mathbb{N}, \ d_i\in D \ (1\leq i\leq r),\ a_{0\ldots 0}=0\right\},$$
	donde conforme a nuestra notación $ \ a_{v_1\ldots v_r}\in\mathbb{R}$.

\begin{lema} \label{lemautil}Sea $X$ un espacio compacto y $A$ una subálgebra cerrada de $C(X)$. Si $f,g\in A$ entonces $|f|, \ \max\{f,g\}, \ \min\{f,g\} \in A$.
	
\end{lema}

\begin{proof}[Demostración]
	
	Debido a que: $$\min\{f,g\}=\frac{f+g-|f-g|}{2}\ \ \mbox{ y }\ \max\{f,g\}=\frac{f+g+|f-g|}{2},$$ bastará con demostrar que si $f\in A$ entonces $|f|\in A$.

	Sea $a:=\sup\{|f(x)| \mid x\in X\}$. Por ser $f$ continua y $X$ compacto se tiene que $a<\infty$, y por el Lema \ref{vabs} tenemos que para cada $\varepsilon>0$ existe un polinomio $p_\varepsilon$ tal que $p_\varepsilon(0)=0$ y $\left| |f(x)| - p_\varepsilon(f(x))\right|<\varepsilon$ para cada $ x\in X$. 
	
	Tenemos que $ p_\varepsilon\circ f\in A$, pues al ser $p_\varepsilon(0)=0$ el polinomio $p_\varepsilon$ carece de término independiente, y por tanto $ p_\varepsilon\circ f$ puede escribirse como un elemento de $A(D)$. Además $ p_\varepsilon\circ f$ converge uniformemente a $|f|$ en $X$, luego por ser $A$ cerrado se deduce que $|f|\in A$ y así, queda demostrado el resultado.
	
\end{proof}

\begin{lema}
	
	Sea $X$ compacto y $A$ una subálgebra de $C(X)$. Si $f,g\in \bar{A}$ y $c\in\mathbb{R}$ entonces $f+g, \ f\cdot g$ y $c\cdot f$  pertenecen a $\bar{A}$. Por tanto, se tiene que $\bar{A}$ es también una subálgebra de $C(X)$.
	
\end{lema}

\begin{proof}[Demostración]
	
	Como $f,g\in \bar{A}$, existen sucesiones $(f_n),(g_n)\subseteq A$ tales que $f_n\rightarrow f$  y  $g_n\rightarrow g$ uniformemente en $X$. Por tanto $(f_n + g_n)\rightarrow f + g$, $(f_n\cdot g_n)\rightarrow f\cdot g$ y $(c\cdot f_n)\rightarrow c\cdot f$ uniformemente en $X$. Como además $A$ es un álgebra tenemos que $f_n\cdot g_n ,\  f_n + g_n$ y $c\cdot f_n\in A$ para cada $n\in\mathbb{N}$ y se sigue el resultado.
	
\end{proof}

\begin{teo}[Stone-Weierstrass]
	
	Sea $X$ un espacio compacto y $D\subseteq C(X)$ tal que:
	
	\begin{enumerate}
		
		\item Para cada $x\in X$ existe $f_x\in D$ tal que $f_x(x)\neq 0$.\label{prim}
		
		\item Para cada par de elementos distintos $x,y\in X$, existe $f\in D$ tal que $f(x)\neq f(y)$.\label{seg}
		
	\end{enumerate}

	Entonces $\overline{A(D)}=C(X)$.
	
\end{teo}

\begin{proof}[Demostración]
	
	Sean $f\in C(X)$ y $\varepsilon>0$. Vamos a demostrar que existe una función $g_\varepsilon \in \overline{A(D)}$ tal que $| f(x)- g_\varepsilon(x)|< \varepsilon$ para todo $x\in X$, con lo que tendremos que: $$\overline{A(D)} =\overline{\ \overline{A(D)}\ } =C(X).$$

	Primero demostraremos que para cada par de elementos $ y,z\in X$, existe una función $h\in A(D)$ tal que $h(y)=f(y)$ y $h(z)=f(z)$. En efecto, por (\ref{prim})  existen funciones $ f_y ,f_z \in D$ tales que $f_y(y)\neq 0$ y $f_z(z)\neq 0$. Definimos las funciones $f_1,f_2:X\rightarrow\mathbb{R}$ como: $$f_1:= \frac{1}{f_y(y)}\cdot f_y , \ \ f_2:= \frac{1}{f_z(z)}\cdot f_z.$$ 
	
	Entonces  $f_1, f_2\in A(D)$, $f_1(y)=1$ y $f_2(z)=1$.

	Sea $h_1:=f_1 + f_2 - f_1\cdot f_2$; es claro que $h_1\in A(D)$ y que $h_1(y)=h_1(z)=1$.

	Si $y=z$, definiendo $h:=f(y)\cdot h_1$ habremos acabado. 
	
	Si $y\neq z$, por la condición (\ref{seg}) existe una función $h_2 \in D$ tal que $h_2(y)\neq h_2(z)$ y entonces definimos: $$h:=\frac{f(y)-f(z)}{h_2(y)-h_2(z)}\cdot h_2 - \frac{f(y)h_2(z)- f(z)h_2(y)}{h_2(y)-h_2(z)}\cdot h_1.$$ Así, $h\in A(D)$ y además cumple que $h(y)=f(y)$ y $h(z)=f(z)$.

	A continuación demostramos que dado $z\in X$, existe $ h_z \in \overline{A(D)}$ tal que $h_z(z)=f(z)$ y $h_z(x)<f(x) +\varepsilon$ para cada $x\in X$. 
	Por lo anterior tenemos que para todo $y \in X$ existe una función $h_y\in A(D)$ tal que $h_y(y)=f(y)$ y $h_y(z)=f(z)$. Como $f-g_y$ es continua y se anula en $y$, existe un entorno $U_y$ de $y$ tal que $g_y(x)<f(x)+ \varepsilon$ para cada $x\in U_y$. La compacidad de $X$ implica que existe un conjunto finito de puntos $L\subseteq X$ tal que $X=\bigcup _{y\in L} U_y$. Tomando $h_z:= \min\{g_y \mid y\in L\}$ por el Lema \ref{lemautil} se tiene que $h_z\in \overline{A(D)}$. Además $h_z(z)=\min \{g_y(z) \mid y\in L\}=z$ y si $x\in U_y$ entonces $h_z(x)<g_y(x)<f(x)+ \varepsilon$, por lo que $h_z(x)<f(x)+\varepsilon$ para todo $x\in X$.

	Finalmente elijamos para cada $z\in X$ una función $h_z$ con estas propiedades. Como $f-h_z$ es continua y se anula en $z$ tenemos que existe un entorno $ W_z$ de $z$ tal que $h_z(x)>f(x)- \varepsilon$ para cada $x\in W_z$. Como $X$ es compacto existirá un conjunto finito de puntos $ K\subseteq X$ tal que $X =\bigcup _{z\in K} W_z$. 
	
	Definimos $g_\varepsilon:= \max\{h_z \mid z\in K\}$ y tendremos por el Lema \ref{lemautil} que $g_\varepsilon\in \overline{A(D)}$. Además si $x\in W_z$ entonces $f(x)- \varepsilon < h_z(x)<g_\varepsilon(x)<f(x)+ \varepsilon$ y se sigue el resultado.

\end{proof}

\begin{cor}\label{weierstrass-real}
	
	Sea $f:[a,b]\rightarrow \mathbb{R}$ una función continua. Entonces dado $\varepsilon>0$ existe un polinomio $ p_\varepsilon:\mathbb{R}\rightarrow \mathbb{R}$ tal que $|f(x)-p_\varepsilon(x)|<\varepsilon$ para cada $x\in [a,b]$.
	
\end{cor}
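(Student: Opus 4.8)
The plan is to obtain this corollary as an immediate application of the Stone-Weierstrass theorem to the compact space $X=[a,b]$, choosing the generating set $D$ so that $A(D)$ is exactly the algebra of polynomial functions.

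First I would take $D=\{u,v\}\subseteq C([a,b])$, where $u$ denotes the constant function $u(t)=1$ and $v$ the identity $v(t)=t$. I would then check the two hypotheses of the theorem: condition (\ref{prim}) holds because $u(x)=1\neq 0$ for every $x\in[a,b]$, and condition (\ref{seg}) holds because, given $x\neq y$, we have $v(x)=x\neq y=v(y)$, so $v$ separates los puntos de $X$. Since $[a,b]$ is compact, the theorem applies and yields $\overline{A(D)}=C([a,b])$.

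Next I would identify $A(D)$ explicitly. Using the description of $A(D)$ recalled above, a generic element has the form $\sum a_{v_1 v_2}\, u^{v_1} v^{v_2}$ with $a_{00}=0$; since $u^{v_1}\equiv 1$, each such element is simply a polynomial in $t$, and conversely every polynomial arises in this way. In particular the constant polynomials are not lost despite the restriction $a_{00}=0$, because a constant $c$ is recovered from the monomial $c\,u^{1}v^{0}$. Thus $A(D)$ coincides with the algebra of (restricciones a $[a,b]$ de) polinomios reales.

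Finally, since $f\in C([a,b])=\overline{A(D)}$, there exists a sequence of polynomials in $A(D)$ converging uniformly to $f$ on $[a,b]$; choosing a term of that sequence within $\varepsilon$ of $f$ in the uniform norm gives the desired $p_\varepsilon$. I do not expect a genuine obstacle here, since the result is essentially a reformulation of Stone-Weierstrass. The only point requiring a little care is verifying that the constraint $a_{00}=0$ in the definition of $A(D)$ does not exclude the constant polynomials; this is precisely why I include the constant function $u$ in $D$ rather than working with $D=\{v\}$ alone, for which $A(\{v\})$ would consist only of polynomials with zero término independiente and could not be dense.
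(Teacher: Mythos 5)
Tu propuesta es correcta y sigue esencialmente el mismo camino que la demostración del trabajo: aplicar el Teorema de Stone-Weierstrass al compacto $[a,b]$ con $D=\{1,x\}$, verificando que la constante $1$ garantiza la hipótesis (1) y la identidad separa puntos, e identificando $A(D)$ con el álgebra de los polinomios. De hecho, tu comprobación de que la restricción $a_{0\ldots 0}=0$ no excluye a los polinomios constantes (al recuperar $c$ como $c\cdot u^{1}v^{0}$) hace explícito un detalle que el trabajo da por inmediato.
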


\begin{proof}[Demostración]
	
	Como $[a,b]$ es compacto y el álgebra de los polinomios coincide con el álgebra generada por $D=\{1,x\}$ (conjunto que cumple las hipótesis del Teorema de Stone-Weierstrass trivialmente) el resultado se sigue de forma inmediata.
	
\end{proof}

\begin{teo}[Weierstrass]\label{weierstrass}
	Dada una función continua $f:[a,b]\rightarrow \mathbb{C}$ y un $\varepsilon>0$, existe un polinomio $P:\mathbb{R}\rightarrow \mathbb{C}$ tal que $|f(x)-P(x)|<\varepsilon$ para cada $x\in [a,b]$.
\end{teo}
\begin{proof}[Demostración]
	Aplicando el Corolario \ref{weierstrass-real} a la parte real e imaginaria de $f$ tendremos que existen polinomios $p_\varepsilon,q_\varepsilon: \mathbb{R}\rightarrow \mathbb{R}$ tales que: $$|\Re(f)(x)-p_\varepsilon(x)|<\varepsilon/2\ \ \  \mbox{ y }\ \ |\Im(f)(x)-q_\varepsilon(x)|<\varepsilon/2$$ para todo $x\in[a,b]$. El polinomio $P:\mathbb{R}\rightarrow \mathbb{C}$ definido como $P(x):=p_\varepsilon(x)+iq_\varepsilon(x)$,
	cumple que: $$|f(x)-P(x)|<\varepsilon \quad \mbox{ para cada }x\in[a,b].$$ 
\end{proof}
Una consecuencia importante de este resultado es que los polinomios con coeficientes cuyas partes reales e imaginarias sean racionales también serán densos en $C([a,b],\mathbb{C})$, y que por tanto este espacio es separable. Observar que si sustituimos en el conjunto $D$ en la demostración del Corolario \ref{weierstrass-real} el monomio $x$ por alguna potencia $x^n$, las hipótesis del Teorema de Stone-Weierstrass siguen siendo ciertas.

\begin{cor}\label{potencias de n}
	Dada una función continua $f:[a,b]\rightarrow\mathbb{C}$ y un entero positivo $n$, existe una sucesión de polinomios $(p_n)_{n=1}^\infty$ cuyos monomios cumples que sus exponentes son múltiplos de $n$ y que cumple que $(p_n)_{n=1}^\infty$ converge uniformemente a $f$ en $[a,b]$.
\end{cor}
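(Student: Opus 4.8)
The plan is to mimic exactly the two-step strategy already used for the Weierstrass Theorem (Theorem \ref{weierstrass}): first establish the real-valued statement by a direct application of Stone-Weierstrass, and then recover the complex case by splitting a function into its real and imaginary parts. To avoid clashing with the fixed integer $n$, I will index the approximating sequence by a separate variable $m$.

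First I would fix the generating set $D=\{1,x^n\}\subseteq C([a,b])$ and verify the two hypotheses of the Stone-Weierstrass Theorem. Hypothesis (\ref{prim}) is immediate, since the constant function $1\in D$ never vanishes. For hypothesis (\ref{seg}) the only candidate for separating points is $t\mapsto t^n$, so I must check that $x^n\neq y^n$ whenever $x\neq y$ in $[a,b]$, that is, that $t\mapsto t^n$ is injective on $[a,b]$. Granting this, Stone-Weierstrass gives $\overline{A(D)}=C([a,b])$. By the explicit description of $A(D)$ recalled above, the elements of $A(D)$ are precisely the real polynomials in $x^n$; and since $1\in D$, constant terms are also available (a term $a\cdot 1^{v}=a$ is legitimate even though the pure index $a_{0\ldots 0}$ is excluded). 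Hence $A(D)$ is exactly the set of real polynomials all of whose exponents are multiples of $n$ (recalling that $0$ is a multiple of $n$), which yields the real-valued statement.

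Next I would pass to the complex case following the proof of Theorem \ref{weierstrass} verbatim. Applying the real-valued statement to $\Re(f)$ and $\Im(f)$, for each $\varepsilon>0$ I obtain real polynomials $p_\varepsilon,q_\varepsilon$ in $x^n$ with $|\Re(f)(x)-p_\varepsilon(x)|<\varepsilon/2$ and $|\Im(f)(x)-q_\varepsilon(x)|<\varepsilon/2$ on $[a,b]$; then $P_\varepsilon:=p_\varepsilon+iq_\varepsilon$ is a complex polynomial in $x^n$ with $|f(x)-P_\varepsilon(x)|<\varepsilon$ uniformly. Taking $\varepsilon=1/m$ for $m=1,2,\dots$ produces the required sequence $(p_m)_{m=1}^\infty$ of polynomials in $x^n$ converging uniformly to $f$ on $[a,b]$.

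The only genuine obstacle is the separation hypothesis. The map $t\mapsto t^n$ is injective on every interval when $n$ is odd, but for $n$ even it is injective precisely when $[a,b]$ lies on one side of the origin, i.e. when $0\notin(a,b)$. The argument therefore goes through without any restriction on the interval $[0,1]$ that is central to this work (and, more generally, whenever $0\notin(a,b)$ for even $n$). I would flag, however, that for a symmetric interval such as $[-1,1]$ with $n$ even the statement genuinely fails: every polynomial in $x^n$ is an even function, and a uniform limit of even functions is even, so an odd function like $f(x)=x$ cannot be approximated. Thus the hypothesis that $t\mapsto t^n$ separates the points of $[a,b]$ is not merely a technical convenience but is essential, and it is the one point the proof must address with care.
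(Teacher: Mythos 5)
Your proof is correct and follows essentially the same route as the paper: the paper gives no separate proof of this corollary beyond the remark preceding it, namely that replacing the monomial $x$ by $x^n$ in the set $D$ of Corollary \ref{weierstrass-real} leaves the hypotheses of the Stone--Weierstrass theorem intact, the complex case being handled implicitly by splitting into real and imaginary parts as in Theorem \ref{weierstrass}; this is exactly your argument, including your (correct) observation that $1\in D$ makes the constants available in $A(D)$. Your closing caveat, however, goes beyond the paper and is a genuine catch: for even $n$ the map $t\mapsto t^n$ separates the points of $[a,b]$ only when the interval does not contain numbers of both signs, so the paper's claim that the hypotheses ``siguen siendo ciertas'' fails in general, and so does the corollary as stated --- on $[-1,1]$ with $n$ even, every element of $A(\{1,x^n\})$ is an even function, a uniform limit of even functions is even, and $f(x)=x$ cannot be approximated. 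The statement (and the paper's one-line argument) is valid precisely when $n$ is odd or $0\notin(a,b)$; in particular it holds on $[0,1]$, which is the only case the paper actually uses later, e.g.\ when it notes that this corollary also follows from the M\"untz--Sz\'asz theorem.
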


\noindent Recordamos que en el caso real llamamos polinomio trigonométrico a cualquier combinación lineal de las funciones $\sin(nx)$ y $\cos(mx)$ siendo $n$ y $ m$ números naturales. 

\noindent El siguiente resultado es de vital importancia en Análisis Armónico. 
\begin{cor}
	
	Toda función real continua en $[-\pi,\pi]$ puede ser aproximada uniformemente por polinomios trigonométricos.
	
\end{cor}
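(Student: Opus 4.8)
The plan is to realize the trigonometric polynomials as the algebra $A(D)$ generated by a suitable set $D$ and then invoke the Teorema de Stone-Weierstrass. The natural choice is $D=\{1,\cos x,\sin x\}$. First I would check that the algebra generated by these three functions is exactly the set of trigonometric polynomials. Since $1\in D$, the constants lie in $A(D)$, and $A(D)$ is spanned by products $\cos^a x\,\sin^b x$. Using the product-to-sum identities $\cos(nx)\cos x=\tfrac12[\cos((n-1)x)+\cos((n+1)x)]$, $\sin(nx)\cos x=\tfrac12[\sin((n-1)x)+\sin((n+1)x)]$, and the analogous ones involving $\sin x$, an induction on the number of factors shows that every such product reduces to a finite real linear combination of the functions $\cos(nx)$ and $\sin(mx)$. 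Hence $A(D)$ coincides with the space of trigonometric polynomials, and conversely each $\cos(nx),\sin(mx)$ lies in $A(D)$.

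I expect the main obstacle to be point separation. On the interval $[-\pi,\pi]$ the hypothesis (\ref{seg}) of Stone-Weierstrass fails: every element of $D$---indeed every trigonometric polynomial, being $2\pi$-periodic---takes the same value at $-\pi$ and at $\pi$, so the pair $\{-\pi,\pi\}$ cannot be separated. Consequently the conclusion is to be understood for functions satisfying $f(-\pi)=f(\pi)$, equivalently for continuous functions on the circle, which is the standard formulation. The device that makes this precise is to transport the problem to the unit circle $S^1=\{(u,v)\in\mathbb{R}^2 : u^2+v^2=1\}$, which is compact, by means of the continuous surjection $\varphi:[-\pi,\pi]\to S^1$, $\varphi(\theta)=(\cos\theta,\sin\theta)$. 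Pulling back by $\varphi$ identifies $C(S^1)$ with the subspace of those $f\in C[-\pi,\pi]$ with $f(-\pi)=f(\pi)$, and it carries the coordinate functions $u,v$ to $\cos,\sin$.

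On $S^1$ the hypotheses of Stone-Weierstrass are now easy to verify for $\tilde D=\{1,u,v\}$: the constant function $1$ never vanishes, giving (\ref{prim}); and two distinct points of $S^1$ must differ in at least one coordinate, so either $u$ or $v$ separates them, giving (\ref{seg}). Therefore $\overline{A(\tilde D)}=C(S^1)$. Transporting this back through $\varphi$, and using the identification of $A(\tilde D)$ with the trigonometric polynomials established above (the sup norm on $S^1$ corresponds, via the surjectivity of $\varphi$, to the sup norm on $[-\pi,\pi]$), I conclude that every continuous $f:[-\pi,\pi]\to\mathbb{R}$ with $f(-\pi)=f(\pi)$ is a uniform limit of trigonometric polynomials, which is the content of the corollary.

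Once the problem is placed on $S^1$, the verification of the two Stone-Weierstrass hypotheses and the algebraic identification of $A(\tilde D)$ with the trigonometric polynomials are routine; the only genuine subtlety is the periodicity obstruction at the endpoints, resolved by passing to the circle.
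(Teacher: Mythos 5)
Your proof is correct, and it takes a genuinely different --- and in fact more careful --- route than the paper. The paper's own proof is one line: it asserts that $D=\{\sin(x),\cos(x)\}$ satisfies the hypotheses of the Stone-Weierstrass theorem on $[-\pi,\pi]$ and that $A(D)$ contains all trigonometric polynomials. But as you correctly observe, the point-separation hypothesis fails there: every element of $A(D)$, being $2\pi$-periodic, takes the same value at $-\pi$ and at $\pi$, so the pair $\{-\pi,\pi\}$ is never separated. Consequently the corollary as literally stated is false --- if $f(-\pi)\neq f(\pi)$, then for every trigonometric polynomial $T$ one has $\|f-T\|_\infty\geq \tfrac12|f(\pi)-f(-\pi)|$ by the triangle inequality, since $T(-\pi)=T(\pi)$ --- and the paper's proof glosses over exactly this point. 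Your detour through the circle is the standard repair: on $S^1$ the set $\tilde D=\{1,u,v\}$ does satisfy both hypotheses, Stone-Weierstrass applies, and pulling back along $\varphi(\theta)=(\cos\theta,\sin\theta)$ yields density of the trigonometric polynomials in $\{f\in C[-\pi,\pi] : f(-\pi)=f(\pi)\}$, which is the correct formulation (equivalently, the statement for continuous $2\pi$-periodic functions). Your algebraic identification of $A(\{1,\cos,\sin\})$ with the trigonometric polynomials via the product-to-sum identities is also sound, and is the half of the argument the paper does state. In short, what your approach buys is a proof that actually works, at the necessary price of restricting the class of functions; what the paper's approach would buy, if it worked, is brevity --- but it cannot work as written, because the hypothesis it invokes is not satisfied on the closed interval with both endpoints included.
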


\begin{proof}[Demostración]
	
	Como $D=\{\sin(x),\cos(x)\}$ cumple las hipótesis del Teorema de Stone-Weierstrass y $A(D)$ contiene todos los polinomios trigonométricos se sigue el resultado.
	
\end{proof}

\begin{cor}
	
	Sea $X\subseteq \mathbb{R}^n$ un conjunto compacto. Entonces toda función continua $f:X\rightarrow\mathbb{C}$ puede ser aproximada uniformemente por polinomios.
	
\end{cor}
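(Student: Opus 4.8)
The plan is to deduce this directly from the Stone-Weierstrass theorem, exactly as was done for the one-variable case in Corollary \ref{weierstrass-real}, but first reducing the complex-valued statement to the real-valued one. So I would begin by treating a real-valued $f:X\rightarrow\mathbb{R}$, and only at the very end handle a general complex $f$ by splitting it into its real and imaginary parts $\Re(f),\Im(f)\in C(X)$, approximating each uniformly by a real polynomial, and recombining them into a complex polynomial $p+iq$, precisely as in the proof of Theorem \ref{weierstrass}.

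For the real case, the key step is to choose a generating set whose algebra is exactly the polynomials on $X$. I would take $D=\{1,\pi_1,\ldots,\pi_n\}$, where $1$ denotes the constant function and $\pi_i:X\rightarrow\mathbb{R}$ is the $i$-th coordinate projection $\pi_i(x)=x_i$; each of these is continuous on the compact set $X$. Since every monomial $x_1^{a_1}\cdots x_n^{a_n}$ is a product of powers of the $\pi_i$, and any constant arises as a coefficient times a positive power of the function $1$ (which is why the convention $a_{0\ldots 0}=0$ in the description of $A(D)$ causes no loss), the algebra $A(D)$ coincides with the set of restrictions to $X$ of real polynomials in $n$ variables.

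It then remains to verify the two hypotheses of Stone-Weierstrass. For hypothesis (\ref{prim}) it is essential that $1\in D$: the coordinate projections alone would fail at the origin, where $\pi_i(0)=0$ for all $i$, whereas $1(x)=1\neq 0$ for every $x\in X$. For hypothesis (\ref{seg}), given distinct points $x,y\in X$ there is some index $i$ with $x_i\neq y_i$, and then $\pi_i\in D$ separates them since $\pi_i(x)=x_i\neq y_i=\pi_i(y)$. With both conditions satisfied, Stone-Weierstrass gives $\overline{A(D)}=C(X)$, i.e. every real continuous function on $X$ is a uniform limit of polynomials, and the complex case follows by the real-imaginary decomposition above.

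I expect no serious obstacle here, since the argument is essentially a verification rather than a construction. The only points demanding a little care are the bookkeeping that identifies $A(D)$ with the full polynomial algebra despite the $a_{0\ldots 0}=0$ convention, and the observation that the constant function, not the projections, is what secures the non-vanishing condition; neither is difficult given the results already proved.
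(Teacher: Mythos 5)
Your proposal is correct and follows essentially the same route as the paper: reduce to real-valued $f$ via real and imaginary parts, take $D$ to be the constant function $1$ together with the coordinate projections, identify $A(D)$ with the polynomial algebra, and verify the two Stone-Weierstrass hypotheses. Your write-up is in fact more careful than the paper's on two points it leaves implicit — why the $a_{0\ldots 0}=0$ convention does not exclude constants, and that it is the function $1$ (not the projections) that guarantees the non-vanishing condition.
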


\begin{proof}[Demostración]
	Basta con probarlo para $f:X\rightarrow\mathbb{R}$ y luego aplicar el resultado a las partes real e imaginaria. 
	
	Supongamos pues que $f:X\rightarrow\mathbb{R}$. 
	Si $f_0\equiv 1$ y definimos para $0\leq i\leq n$ las proyecciones $f_i:\mathbb{R}^n\rightarrow\mathbb{R}$ como $f_i(x_1,x_2,\ldots,x_n):=x_i$ tenemos que el álgebra generada por $\smash{D=\{f_0,f_1,\ldots,f_n\}}$ coincide con el álgebra de los polinomios en $\mathbb{R}^n$, y como $D$ cumple las hipótesis del Teorema de Stone-Weierstrass se sigue el resultado.
	
\end{proof}

El Teorema de Weierstrass \ref{weierstrass} es falso si sustituimos la función $f$ por una función continua cuyo dominio es un conjunto compacto del plano complejo, pues es bien sabido que el límite uniforme de funciones holomorfas es una función holomorfa y por tanto si consideramos una función continua pero no holomorfa, como por ejemplo la función conjugación, esta no puede ser aproximada por polinomios complejos. Es necesario pues incluir hipótesis adicionales para el caso complejo.

\begin{teo}[Stone-Weierstrass, versión compleja]
		
	Sea $X$ un espacio compacto y $D\subseteq C(X,\mathbb{C})$ tal que:
	
	\begin{enumerate}
		
		\item Para cada $x\in X$ existe $f_x\in D$ tal que $f_x(x)\neq 0$.
		
		\item Para cada par de puntos distintos $x,y\in X$, existe $f\in D$ tal que $f(x)\neq f(y)$.
		
		\item Si $f\in D$, entonces la función conjugada de $f$ también pertenece a $D$, i.e. $\bar{f}\in D$.
	\end{enumerate}

	Entonces $\overline{A(D)}=C(X,\mathbb{C})$.
	
\end{teo}
\begin{proof}[Demostración]
	Consideremos el conjunto $D_{\mathbb{R}}:=\{f\in D\mid f(X)\subset\mathbb{R} \}$.
	
	Si $f\in D$ entonces $f=\Re(f)+i\Im(f)$, donde está claro que $\smash{\Re(f)(X)\subset\mathbb{R}}$ y $\Im(f)(X) \subset\mathbb{R}$.
	
	Por hipótesis también $\bar{f}=\Re(f)-i\Im(f)\in D$, luego $\Re(f), \Im(f)\in D$ y por definición de $D_{\mathbb{R}}$ tendremos que $\Re(f), \Im(f)\in D_{\mathbb{R}}$.
	
	Bastará con probar que el conjunto $D_{\mathbb{R}}$ cumple las condiciones del teorema de Stone-Weierstrass, pues entonces si $f\in C(X,\mathbb{C})$ y $\varepsilon>0$, podremos encontrar elementos $p,q\in A(D_{\mathbb{R}})$ tales que para cada $x\in X$: $$|\Re(f)(x)-p(x)|<\frac{\varepsilon}{2}\quad \mbox{ y }\quad |\Im(f)(x)-q(x)|<\frac{\varepsilon}{2}$$
	y por tanto $g:=p+iq\in A(D)$ cumplirá que $|f(x)-g(x)|<\varepsilon$ para cada $x\in X$, de donde se seguirá el resultado.
	
	Pues bien, dado $x\in X$ existe por hipótesis una función $h\in D$ tal que $h(x)\neq 0$, pero esto es cierto si y sólo si no se anulan simultáneamente en $x$ las funciones $\Re(h)$ y $\Im(h)$, las cuales hemos probado que pertenecen a $D_{\mathbb{R}}$. Por tanto $D_{\mathbb{R}}$ cumple la primera hipótesis del teorema de Stone-Weierstrass.
	
	Finalmente, si $x\neq y$ son elementos de $X$, existe una función $h\in D$ tal que $h(x)\neq h(y)$, lo cual nuevamente solo se puede cumplir si no se dan simultáneamente las condiciones $$\Re(h)(x)=\Re(h)(y)\quad \mbox{y}\quad \Im(h)(x)=\Im(h)(y)$$ por lo que $D_{\mathbb{R}}$ también cumple la segunda hipótesis del teorema de Stone-Weierstrass.
	
	\end{proof}
\section{Teorema de Müntz-Szász}

En esta sección estudiamos un bello resultado en Teoría de la Aproximación: el Teorema de Müntz-Szász. Además, investigamos algunas consecuencias de este resultado.

Comenzamos con unos resultados preparatorios, necesarios para la demostración del Teorema Principal.

\begin{prop}\label{prop-prod}
	
	Sea $\{a_n\}_{n=1}^\infty$ una sucesión de números reales $a_n>1$ para cada $n\in\mathbb{N}$ tal que $\smash{a_n \rightarrow \infty}$. Entonces: $$\prod\limits_{i=1}^n \left(1-\frac{1}{a_i}\right)\stackrel{n\rightarrow\infty}\longrightarrow 0\  \mbox{ si y sólo si }\  \sum\limits_{i=1}^\infty \frac{1}{a_i}=\infty .$$
	
\end{prop}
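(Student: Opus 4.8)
The plan is to pass to logarithms and reduce the statement to a comparison of two series. Write $b_i := 1/a_i$, so that the hypotheses $a_i > 1$ and $a_i \to \infty$ become $0 < b_i < 1$ and $b_i \to 0$. Since every factor $1 - b_i$ lies in $(0,1)$, the partial products $P_n := \prod_{i=1}^n (1 - b_i)$ form a decreasing sequence of positive numbers, hence they converge to some limit $L \in [0,1)$. Taking logarithms, $\log P_n = \sum_{i=1}^n \log(1 - b_i)$ is a decreasing sequence, and $P_n \to 0$ holds if and only if $\log P_n \to -\infty$, i.e. if and only if the series $\sum_{i=1}^\infty \log(1 - b_i)$ diverges to $-\infty$. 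Thus the whole proposition reduces to the claim that $\sum_i \log(1 - b_i) = -\infty$ precisely when $\sum_i b_i = \infty$.

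To establish this I would use two elementary one-variable inequalities that trap $\log(1-b)$ between multiples of $-b$. For the implication $\sum b_i = \infty \Rightarrow P_n \to 0$ it suffices to use the upper bound $\log(1 - b) \le -b$, valid for every $b \in (0,1)$ (it is just the tangent-line inequality $\log(1+x) \le x$). Summing gives $\sum_{i=1}^n \log(1 - b_i) \le -\sum_{i=1}^n b_i$, so divergence of $\sum b_i$ forces $\log P_n \to -\infty$ and hence $P_n \to 0$. For the converse I would prove the lower bound $\log(1 - b) \ge -2b$ on the smaller range $0 \le b \le \tfrac12$; the cleanest verification is to note that both sides vanish at $b = 0$ while $\tfrac{d}{db}\log(1-b) = -\tfrac{1}{1-b} \ge -2 = \tfrac{d}{db}(-2b)$ throughout $[0,\tfrac12]$.

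Here is where the hypothesis $a_i \to \infty$ does its work, and this is the only genuinely delicate point: the lower bound holds only for $b$ small, so I first choose $N$ with $b_i = 1/a_i \le \tfrac12$ for all $i \ge N$ (possible precisely because $b_i \to 0$). On the tail we then get $\sum_{i \ge N} \log(1 - b_i) \ge -2 \sum_{i \ge N} b_i$. Assuming $\sum b_i < \infty$, the right-hand side is finite, so the decreasing partial sums $\log P_n$ are bounded below and $\log P_n \to \log L > -\infty$; the finitely many omitted terms $\log(1-b_1), \dots, \log(1-b_{N-1})$ are finite and do not affect this conclusion. Hence $L > 0$ and $P_n \not\to 0$, which is the contrapositive of the remaining implication, and combining the two directions yields the equivalence. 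In spirit this is just the limit-comparison principle, since $\log(1 - b_i) \sim -b_i$ as $b_i \to 0$, so that $\sum \log(1 - b_i)$ and $-\sum b_i$ converge or diverge together; the main obstacle to writing it carefully is making the one-sided estimate $\log(1-b) \ge -2b$ honest and remembering that it, unlike the upper bound, becomes available only after discarding an initial segment where $a_i$ may be close to $1$.
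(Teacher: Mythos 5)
Your proposal is correct and takes essentially the same approach as the paper: both reduce the vanishing of the product to the divergence of the series $\sum \log(1-1/a_i)$ and then compare that series term by term with $\sum 1/a_i$, using the hypothesis $a_n \to \infty$ so that the comparison holds on a tail. The only difference is one of execution --- the paper extracts two-sided bounds $\tfrac12\cdot\tfrac1{a_n} < \log\bigl(1+\tfrac1{a_n}\bigr) < \tfrac32\cdot\tfrac1{a_n}$ from the limit $\lim_{x\to 0}\log(1+x)/x = 1$ with $\varepsilon = \tfrac12$, whereas you use the explicit one-sided inequalities $\log(1-b)\le -b$ on $(0,1)$ and $\log(1-b)\ge -2b$ on $[0,\tfrac12]$, which does the same job.
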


\begin{proof}[Demostración]
	
	Tenemos que: 
	
	\begin{equation}\label{div1}	
	\prod_{i=1}^n\left(1-\frac{1}{a_i}\right)\rightarrow 0\ \mbox{ si y sólo si }\ \sum_{i=1}^n\log\left(1+\frac{1}{a_i - 1}\right) \rightarrow \infty.
	\end{equation}

	Puesto que $$\lim_{x\rightarrow 0} \frac{\log(1+x)}{x} = 1,$$

	dado $\varepsilon := \frac{1}{2}$, existirá un $\delta >0$ tal que si $0<|x|<\delta$ entonces $$\left|\frac{\log(1+x)}{x} - 1\right|<\frac{1}{2}.$$

	Como $\smash{a_n \rightarrow \infty}$, existe $ n_0 \in \mathbb{N}$ tal que para cada $ n>n_0$ sea  $\delta>\frac1{a_n}>0$.

	Por tanto, para cada $n>n_0$: $$\frac{1}{2}\cdot \frac1{a_n}<\log\left(1+\frac1{a_n}\right)<\frac{3}{2}\cdot \frac1{a_n},$$
	
	de lo que se deduce que: 
	
	\begin{equation} \label{div2}	
	\sum\limits_{i=1}^\infty \frac{1}{a_i}=\infty\ \mbox{ si y sólo si }\ \sum_{i=1}^\infty \log\left(1+\frac1{a_i}\right)=\infty	
	\end{equation}

	Juntando (\ref{div1}) y (\ref{div2}) se sigue el resultado.
	
\end{proof}

\begin{lema} [Gram]\label{Gram}
	
	Sea $\mathcal{H}$ un espacio de Hilbert (complejo), $g\in \mathcal{H}$ y $V:=span\{f_1,\ldots , f_n\}$ un subespacio de $\mathcal{H}$ de dimensión $n$. Entonces, $$d(g,V)^2= \frac{G(f_1,\ldots, f_n, g)}{G(f_1,\ldots ,f_n)}$$

	\noindent siendo $G(h_1,\ldots , h_r)$ el determinante de la matriz de Gram asociada a los elementos $h_1, \ldots ,h_r \in \mathcal{H}$; es decir $$ G(h_1,\ldots , h_r):=\det\left ( 	
	\begin{array}{ccc}
		
		\langle h_1,h_1\rangle & \ldots & \langle h_1,h_r \rangle \\
		
		\vdots & & \vdots \\
		
		\langle h_r,h_1 \rangle & \ldots & \langle h_r,h_r \rangle
		\end{array}
		\right ).$$	
	Notar que $h_1,\ldots,h_r$ son linealmente independientes si y sólo si $\smash{G(h_1,\ldots,h_r)\neq0}$.
	
\end{lema}

\begin{proof}[Demostración]
	
	Por el Teorema de proyección de Riesz, tenemos que existe un único elemento $f\in V$ tal que $\smash{d(g,V)^2=\langle g-f,g-f\rangle}$ y este elemento además cumple que $g-f\in V^\perp$. Por definición de $V$ sabemos que existen unos únicos $c_1,\ldots,c_n\in\mathbb{C}$ tal que $f=\sum\limits_{i=1}^n c_i f_i$, luego se cumplen las siguientes ecuaciones $$ \langle f_i,g-f\rangle=0 \quad 0\leq i\leq n$$
	
	Usando el hecho de que $d(g,V)^2=\langle g,g\rangle - \langle g,f\rangle$ (por ser $g-f\in V^\perp$) y juntando todas las ecuaciones, si las desarrollamos usando las propiedades del producto escalar nos queda el siguiente sistema:
	
	$$\left\{ \begin{array}{cclrl}
	
	\bar{c_1}\langle f_1,f_1\rangle + &\ldots &+ \bar{c_n}\langle f_1,f_n\rangle &- \langle f_1,g\rangle & =0 \\
	
	\vdots & & \vdots & &\\
	
	\bar{c_1}\langle f_n,f_1\rangle +& \ldots &+ \bar{c_n}\langle f_n,f_n\rangle &-\langle f_n,g\rangle & =0\\
	\bar{c_1}\langle g, f_1\rangle +& \ldots &+ \bar{c_n}\langle g,f_n\rangle &+d(g,V)^2 -\langle g,g\rangle & =0
	
\end{array}\right.$$

Entonces, como el vector no nulo $(\bar{c_1},\ldots,\bar{c_n},-1)$ es también solución del sistema homogéneo anterior, será $$\det \left( 
\begin{array}{cccc}
	
	\langle f_1,f_1\rangle & \ldots & \langle f_1,f_n\rangle & -\langle f_1,g\rangle\\
	
	\vdots & & \vdots & \vdots\\
	
	\langle f_n,f_1 \rangle & \ldots & \langle f_n,f_n \rangle & -\langle f_n,g\rangle \\
	
	\langle g,f_1 \rangle & \ldots & \langle g,f_n \rangle & d(g,V)^2 - \langle g,g \rangle
	
\end{array}
\right )=0$$

Escribiendo la última columna como $$(0,\ldots,0,d(g,V)^2)-(\langle f_1,g\rangle,\ldots,\langle f_n,g\rangle,\langle g,g\rangle)$$ deducimos que: $$d(g,V)^2\cdot G(f_1,\ldots ,f_n)= G(f_1,\ldots ,f_n,g)$$ y de aquí se sigue el resultado.

\end{proof}

El siguiente es un resultado técnico cuya prueba sólo utiliza operaciones elementales de los determinantes para llegar a la igualdad deseada.

\begin{lema}[Cauchy]\label{Cauchy}

Si $n\in\mathbb{N}$ y  $x_1,\ldots ,x_n,y_1,\ldots ,y_n \in \mathbb{C}$ cumplen que $x_i+y_j\neq 0$ para cada $ 0\leq i,j\leq n$, entonces: $$D_n:=\det\left (
\begin{array}{ccc}
	\frac1{x_1+y_1} & \ldots & \frac1{x_1+y_n} \\
	
	\vdots & & \vdots \\
	
	\frac1{x_n+y_1} & \ldots & \frac1{x_n+y_n}
	\end{array}
	\right )=\frac{\prod\limits_{1\leq i<j\leq n}(x_j-x_i)(y_j-y_i)}{\prod\limits_{1\leq i,j\leq n}(x_i+y_j)}.$$

\end{lema}

\begin{proof}[Demostración]

Primero sacamos común denominador en cada columna y extraemos dicho factor de cada columna fuera del determinante obteniendo: $$D_n=\frac{1}{\prod\limits_{1\leq i,j\leq n} (x_i+y_j)}\det\left(
\begin{array}{ccc}
	
	\prod_{i\neq 1}(x_i+y_1) & \ldots & \prod_{i\neq 1}(x_i+y_n)\\
	
	\vdots & & \vdots\\
	
	\prod_{i\neq n}(x_i+y_1) & \ldots & \prod_{i\neq n}(x_i+y_n)
	
\end{array}
\right).$$

A continuación, restamos en la primera fila de la anterior matriz la segunda fila, a la segunda fila la tercera, y así hasta restar a la ($n-1$)-ésima fila la última, dejando esta igual. Sacando los factores comunes obtenemos: $$D_n=\frac{\prod\limits_{i=2}^n (x_i-x_{i-1})}{\prod\limits_{1\leq i,j\leq n} (x_i+y_j)}\det\left(
\begin{array}{ccc}
	
	\prod_{i\neq 1,2}(x_i+y_1) & \ldots & \prod_{i\neq 1,2}(x_i+y_n)\\
	
	\vdots & & \vdots\\
	
	\prod_{i\neq n-1,n}(x_i+y_1) & \ldots & \prod_{i\neq n-1,n}(x_i+y_n)\\
	
	\prod_{i\neq n}(x_i+y_1) & \ldots & \prod_{i\neq n}(x_i+y_n)	
\end{array}
\right).$$

Ahora repetimos el mismo proceso pero dejando intactas las dos últimas filas, luego dejando intactas las 3 últimas y así sucesivamente hasta que llegamos a la siguiente expresión: $$ D_n=\frac{\prod\limits_{1\leq i<j\leq n}(x_j-x_i)}{\prod\limits_{1\leq i,j\leq n}(x_i+y_j)} \left(
\begin{array}{ccc}
	
	1 & \ldots & 1\\
	
	(x_1+y_1) & \ldots & (x_1+y_n)\\
	
	\vdots & & \vdots\\
	
	\prod_{i\neq n}(x_i+y_1) & \ldots & \prod_{i\neq n}(x_i+y_n)	
\end{array}
\right). $$

Denotando por $R_i$ la fila $i$-ésima, en el siguiente paso realizamos las siguientes operaciones (en el mismo orden): $R_n-(x_{n-1}+y_1)R_{n-1}, R_{n-1}-(x_{n-2}+y_1)R_{n-2}, \ldots, R_2-(x_1+y_1)$, y así llegamos a que, definiendo la matriz $$A_n:= \left(
\begin{array}{cccc}

1 & 1 & \ldots & 1\\

0 & (y_2-y_1) & \ldots & (y_n-y_1)\\

\vdots & & & \vdots\\

0 & (y_2-y_1)\prod_{i=1}^{n-2}(x_i+y_1) & \ldots & (y_n-y_1)\prod_{i=1}^{n-2}(x_i+y_n)

\end{array}
\right),$$ se cumple que 
$$ D_n=\frac{\prod\limits_{1\leq i<j\leq n}(x_j-x_i)}{\prod\limits_{1\leq i,j\leq n}(x_i+y_j)}\cdot  A_n.$$

Si ahora desarrollamos el determinante por la primera columna y sacamos los factores comunes de cada columna, observamos que repitiendo el proceso anterior $n-2$ veces más llegamos a la igualdad deseada.

\end{proof}

\begin{lema}\label{lema-distancia}

Sean $q,\lambda_0,\ldots,\lambda_n$ números reales mayores estrictamente que $-\frac{1}{2}$. Entonces si $d$ es la distancia inducida por la norma del espacio de Hilbert $L^2[0,1]$, se tiene que: $$\delta:=d(x^q,span_{L^2[0,1]}\{x^{\lambda_0},\ldots,x^{\lambda_n}\})=\frac1{\sqrt{2q+1}}\prod\limits_{i=0}^n \left|\frac{q-\lambda_i}{q+\lambda_i+1}\right| .$$

\end{lema}

\begin{proof}[Demostración]

Por el Lema de Gram (\ref{Gram}), sabemos que $$\delta=\sqrt{\frac{G(x^{\lambda_0},\ldots,x^{\lambda_n},x^q)} {G(x^{\lambda_0},\ldots,x^{\lambda_n})}}.$$

Además, en $L^2[0,1]$ se tiene que si $a,b>-\frac{1}{2}$, $$\langle x^a,x^b\rangle=\int^1_0 \!\! x^a x^b\, dx = \frac1{a+b+1}$$

Así que los determinantes de las matrices de Gram serán de la siguiente forma: $$G(x^{\lambda_0},\ldots,x^{\lambda_n})=\det \left(
\begin{array}{ccc}
	
	\frac1{\lambda_0+\lambda_0+1} & \ldots & \frac1{\lambda_0+\lambda_n+1}\\
	
	\vdots & & \vdots\\
	
	\frac1{\lambda_n+\lambda_0+1} & \ldots & \frac1{\lambda_n+\lambda_n+1}	
\end{array}
\right) $$

$$G(x^{\lambda_0},\ldots,x^{\lambda_n},x^q)=\det \left(
\begin{array}{cccc}
	
	\frac1{\lambda_0+\lambda_0+1} & \ldots & \frac1{\lambda_0+\lambda_n+1} & \frac1{\lambda_0+q+1}\\
	
	\vdots & & \vdots\\
	
	\frac1{\lambda_n+\lambda_0+1} & \ldots & \frac1{\lambda_n+\lambda_n+1}& \frac1{\lambda_n+q+1}\\
	
	\frac1{q+\lambda_0 +1} & \ldots & \frac1{q+\lambda_n+1} & \frac1{q+q+1}
	
\end{array}
\right) $$

Por tanto, usando el Lema de Cauchy \ref{Cauchy} con $x_i=\lambda_i, \ y_i=\lambda_i+1$ si $0\leq i\leq n$ y $x_{n+1}=q, \ y_{n+1}=q+1$ y simplificando llegamos al resulto deseado.

\end{proof}

\underline{Observación:} Naturalmente, podríamos obtener este resultado ortonormalizando el conjunto $\{x^{\lambda_0},\ldots,x^{\lambda_n}\}$ en $L^2[0,1]$ vía el método de Gram-Schmidt, calculando la proyección de $x^q$ en el conjunto resultante y hallar la distancia entre $x^q$ y su proyección. Sin embargo este método habría sido poco práctico, y por ello hemos probado el resultado de esta forma. 

\begin{teo}[Müntz-Szász]

Sea $\{\lambda_0\!=0\!<\lambda_1<\lambda_2<\ldots\}$ una sucesión de números reales tales que $lim_{j\rightarrow\infty}\lambda_j=\infty$. Entonces, $\smash{span_{C([0,1],\mathbb{C})}\{x^{\lambda_i}\mid i\geq0 \}}$ es denso en $C([0,1],\mathbb{C})$ si y sólo si: $$\sum_{i=0}^\infty \frac1{\lambda_i}=\infty.$$

\end{teo}

\begin{proof}[Demostración]

	Comenzamos probando la siguiente afirmación:
	
	\begin{itemize}
		\item Dado $\smash{q\in \mathbb{N}\setminus \{\lambda_i \mid i\in\mathbb{N}\},}$ se tiene que $$x^q\in \overline{span_{L^2[0,1]}\{x^{\lambda_i}\mid i\in \mathbb{N}\cup \{0\} \}}\ \mbox{ si y sólo si }\ \sum_{i=0}^\infty \frac1{\lambda_i}=\infty.$$
	\end{itemize}
	
	En efecto, por un lado $$x^q\in \overline{span_{L^2[0,1]}\{x^{\lambda_i}\mid i\in \mathbb{N}\cup \{0\} \}}\ \mbox{ si y sólo si }\  d(x^q,span_{L^2[0,1]}\{x^{\lambda_0},\ldots,x^{\lambda_n}\})\rightarrow0$$ cuando $n\rightarrow\infty$. 
	
	Por el Lema \ref{lema-distancia} tenemos que $$d(x^q,span_{L^2[0,1]}\{x^{\lambda_0},\ldots,x^{\lambda_n}\})=\frac1{\sqrt{2q+1}}\prod\limits_{i=0}^n \left|\frac{q-\lambda_i}{q+\lambda_i+1}\right| $$
	y puesto que $\left|\frac{q-\lambda_i}{q+\lambda_i+1}\right| =\left|1-\frac{2q+1}{q+\lambda_i+1}\right| $, afirmamos por la Proposición \ref{prop-prod} que $$d(x^q,span_{L^2[0,1]}\{x^{\lambda_0},\ldots,x^{\lambda_n}\})\rightarrow0\ \mbox{ si y sólo si }\ \sum_{i=0}^\infty \frac{2q+1}{q+\lambda_i+1}=\infty .$$
	Es trivial que $\sum_{i=0}^\infty \frac{2q+1}{q+\lambda_i+1}=\infty \ \mbox{ si y sólo si }\  \sum_{i=0}^\infty \frac1{\lambda_i}=\infty,$ luego queda probado que $$x^q\in \overline{span_{L^2[0,1]}\{x^{\lambda_i}\mid i\in \mathbb{N}\cup \{0\} \}} \ \mbox{ si y sólo si }\  \sum_{i=0}^\infty \frac1{\lambda_i}=\infty.$$

	Veamos ahora que si $\smash{span_{C([0,1],\mathbb{C})}\{x^{\lambda_i}\mid i\in \mathbb{N}\cup \{0\} \}}$ es denso en $C([0,1],\mathbb{C})$, entonces $\sum_{i=0}^\infty \frac1{\lambda_i}=\infty.$
	
	Por un lado, para cada función $f\in C([0,1],\mathbb{C})$ se tiene la desigualdad $$||f||_{L^2[0,1]}=\sqrt{\int_{0}^{1}(f(x))^2\ dx} \leq ||f||_{\infty},$$ luego si $\smash{span_{C([0,1],\mathbb{C})}\{x^{\lambda_i}\mid i\in \mathbb{N}\cup \{0\} \}}$ es denso en $C[0,1]$ con la norma del supremo, puesto que $C([0,1],\mathbb{C})$ es denso en $L^2[0,1]$ con la norma $||\cdot||_{L^2[0,1]}$, tendremos que $\smash{span_{L^2[0,1]}\{x^{\lambda_i}\mid i\in \mathbb{N}\cup \{0\} \}}$ es denso en $L^2[0,1]$ con su correspondiente norma. 
	
	Como consecuencia, tendremos que para cada $\smash{q\in \mathbb{N}\setminus \{\lambda_i \mid i\in\mathbb{N}\},}$ se tiene que $$x^q\in \overline{span_{L^2[0,1]}\{x^{\lambda_i}\mid i\in \mathbb{N}\cup \{0\} \}}$$ lo cual, según hemos visto al principio de la demostración, implica que $\sum_{i=0}^\infty \frac1{\lambda_i}=\infty$.
	
	Recíprocamente, supongamos que la serie $\sum_{i=0}^\infty \frac1{\lambda_i}$ diverge y veamos que $\smash{span_{C([0,1],\mathbb{C})}\{x^{\lambda_i}\mid i\in \mathbb{N}\cup \{0\} \}}$ es denso en $C([0,1],\mathbb{C})$.
	
	Dado un número real  $r\geq1$ tenemos que: $$\left|x^r-\sum_{i=i_0}^{n}a_ix^{\lambda_i}\right|=\left|\int_{0}^{x}\left(rt^{r-1}-\sum_{i=i_0}^{n}a_i\lambda_it^{\lambda_i-1}\right) dt  \right|\leq \int_{0}^{1}\left|rt^{r-1}-\sum_{i=i_0}^{n}a_i\lambda_it^{\lambda_i-1}\right|dt $$ $$\leq\sqrt{\int_{0}^{1}\left(rt^{r-1}-\sum_{i=i_0}^{n}a_i\lambda_it^{\lambda_i-1}\right)^2dt} =||rx^{r-1}-\sum_{i=i_0}^{n}a_i\lambda_ix^{\lambda_i-1}||_{L^2[0,1]} $$ donde en la primera igualdad hemos usado la regla de Barrow y en la segunda desigualdad hemos usado la desigualdad de Cauchy-Schwarz.
	
	Como hemos visto al principio de la demostración, la hipótesis de que  $\sum_{i=0}^\infty \frac1{\lambda_i}=\infty$ implica que para cada  $\smash{q\in \mathbb{N}\setminus \{\lambda_i \mid i\in\mathbb{N}\} }$ sea  $$x^q\in \overline{span_{L^2[0,1]}\{x^{\lambda_i}\mid i\in \mathbb{N}\cup \{0\} \}}.$$ 
	
	Si tomamos $i_0\in\mathbb{N}$ tal que $\lambda_i-1>0$ para cada $i>i_0$, entonces como también $\sum_{i=i_0}^\infty \frac1{\lambda_i-1}=\infty$ tendremos que $$x^q\in \overline{span_{L^2[0,1]}\{x^{\lambda_i-1}\mid i=i_0,i_0+1,\ldots \}}.$$

	Consideramos el caso en que $\smash{q-1\in \mathbb{N}\setminus \{\lambda_i-1 \mid i=i_0,i_0+1,\ldots\} }$, pues el otro es trivial. 
	
	Dado $\varepsilon>0$, podemos encontrar una función $$f\in span_{L^2[0,1]}\{x^{\lambda_i-1}\mid i=i_0,i_0+1,... \}$$ de modo que se cumpla la desigualdad $$ || qx^{q-1}-f||_{L^2[0,1]}<\varepsilon.$$
	Entonces, teniendo en cuenta la desigualdad que hemos probado previamente; recordamos, si $r\geq1$ entonces para cada $s\in[0,1]$:  $$\left|s^r-\sum_{i=i_0}^{n}a_is^{\lambda_i}\right|\leq||rx^{r-1}-\sum_{i=i_0}^{n}a_i\lambda_ix^{\lambda_i-1}||_{L^2[0,1]}, $$ 
	
	\noindent y considerando una primitiva $F$ de $f$, si tomamos supremos en $s\in[0,1]$ concluimos que $$ ||x^q-F||_{\infty}<\varepsilon$$ luego queda probado que también $$x^q\in \overline{span_{C([0,1],\mathbb{C})}\{x^{\lambda_i}\mid i\in \mathbb{N}\cup \{0\} \}},$$ lo cual junto al teorema de Weierstrass implica que $$C([0,1],\mathbb{C})= \overline{span_{C([0,1],\mathbb{C})}\{x^{\lambda_i}\mid i\in \mathbb{N}\cup \{0\} \}}.$$
\end{proof}

Observar que el Corolario \ref{potencias de n} se obtiene también como consecuencia de este Teorema.

\begin{proof}[2ª Demostración de la suficiencia]

Una de las ideas implícitas en la demostración anterior es que por el teorema de Weierstrass, para ver que $span_{C([0,1],\mathbb{C})}\{x^{\lambda_i}\mid i\in \mathbb{N}\cup \{0\} \}$ es denso en $C([0,1],\mathbb{C})$, basta con probar que para todo $q\in \mathbb{N}\setminus \{\lambda_i \mid i\in\mathbb{N}\}$ se tiene que $x^q\in \overline{span_{C([0,1],\mathbb{C})}\{x^{\lambda_i}\mid i\geq0 \}}$.

Supongamos que $\sum_{i=0}^\infty \frac1{\lambda_i}=\infty.$

Sea $Q_0(x):=x^p$ y definimos recursivamente $Q_n(x)$ en $0\leq x\leq 1$ como: $$Q_n(x):=(\lambda_n-q)x^{\lambda_n}\int ^1 _x Q_{n-1}(t)t^{-1-\lambda_n} \ dt \quad \forall n\geq 1.$$

Afirmamos que para cada $n\geq0$ podemos encontrar unos coeficientes $a_{n,i}\in\mathbb{R}$ tales que: $$Q_n(x)=x^q - \sum_{i=0}^n a_{n,i}x^{\lambda_i}.$$

Lo probamos por inducción, siendo el caso $n=0$ trivial. Supongamos que es cierto para $n$, entonces:  $$Q_{n+1}(x):=(\lambda_{n+1}-q)x^{\lambda_{n+1}}\int ^1 _x (x^q-\sum_{i=0}^n a_{n,i}x^{\lambda_i})t^{-1-\lambda_{n+1}} \ dt $$

e integrando y usando la regla de Barrow nos queda: $$Q_{n+1}(x):=(\lambda_{n+1}-q)x^{\lambda_{n+1}}\left[ \frac{t^{q-\lambda_{n+1}}}{q-\lambda_{n+1}} - \sum_{i=0}^n a_{n,i}\frac{t^{\lambda_i-\lambda_{n+1}}}{\lambda_i-\lambda_{n+1}} \right]^1_x = $$

$$=x^q +\sum_{i=0}^n a_{n,i}x^{\lambda_i}\frac{\lambda_{n+1}-q}{\lambda_{n+1}-\lambda_i} + x^{\lambda_{n+1}}(\lambda_{n+1}-q)\left(\frac1{\lambda_{n+1}-q}- \sum_{i=0}^n a_{n,i}\frac1{\lambda_i-\lambda_{n+1}} \right).$$

Por lo tanto, esto es cierto para todo $n$. Además se tiene que $\| Q_0\|_{\infty}=1$ y que $$| Q_n|\leq | \lambda_n-q|\|Q_{n-1}\|_{\infty}|x^{\lambda_n}|\left|\int ^1 _x t^{-1-\lambda_n} \ dt\right |\leq | \frac{\lambda_n-q}{\lambda_n}|\|Q_{n-1}\|_{\infty}|x^{\lambda_n}-1| $$

Lo cual implica, tomando supremos y aplicando esta desigualdad recursivamente, que: $$||Q_n||_{\infty}\leq \prod_{i=0}^n\left| 1-\frac{q}{\lambda_i}\right|.$$

Finalmente, por la Proposición \ref{prop-prod}, hemos demostrado que si $\sum_{i=0}^\infty \frac1{\lambda_i}=\infty$ entonces $\|Q_n\|_{\infty}\rightarrow 0$, con lo cual $x^q\in \overline{span_{C([0,1],\mathbb{C})}\{x^{\lambda_i}\mid i\in \mathbb{N}\cup \{0\} \}}.$

\end{proof}

\underline{Observación:} A diferencia de la demostración original de Müntz, esta tiene un carácter constructivo, y de hecho, la misma construcción nos habría servido para probar el Lema \ref{vabs} tomando $q:=1$ y $\lambda_i:=2i$ para cada $i\in\mathbb{N}$.

\subsection{Aplicaciones}

El siguiente resultado fue probado por primera vez por Euler en 1737. 

La prueba que incluimos será "semi-original", en el sentido de que comenzaremos el esquema de una demostración existente, la cual no solo prueba la divergencia de la serie si no que también exhibe su carácter asintótico, para poder probar cierta desigualdad, pero una vez probada, continuamos la demostración independientemente usando una Proposición que hemos probado en la anterior sección.

Así, podría mejorarse este resultado (dando más información sobre el comportamiento de la serie), pero para nuestros propósitos será más que suficiente.
 
\begin{teo}[Euler]\label{Euler}
	La serie de los recíprocos de los números primos diverge: $$\sum_{p\ primo}\frac{1}{p}=\infty.$$
\end{teo}
\begin{proof}[Demostración]
	
	Probamos primero que para cada $n\in\mathbb{N}$ se cumple la siguiente desigualdad: \begin{equation}\label{desigualdad-series}
	\sum_{i=1}^n\frac{1}{i}\leq\left(\prod_{p\leq n}\left(1+\frac{1}{p}\right)\right)\left(\sum_{i=1}^n\frac{1}{i^2}\right), 
	\end{equation}
	donde el producto se extiende sobre los primos $p$ menores o iguales que $n$.
	
	Escribimos $n$ como: $$n=p_1^{\alpha_1}\ldots p_r^{\alpha_r}p_{r+1}^{\alpha_{r+1}}\ldots p_s^{\alpha_s}$$ siendo $p_i$ primos distintos, los primero $r$ exponentes números impares y los demás pares. Entonces $$n=(p_1\ldots p_r)(p_1^{\beta_1}\ldots p_s^{\beta_s})^2$$ siendo $\beta_i=\frac{\alpha_i-1}{2}$ si $i=1,\ldots,r$ y $\beta_i=\frac{\alpha_i}{2}$ si $i=r+1,\ldots,s$.
	
	Así, si $1\leq i\leq n$, tendremos que existirán $p_1,\ldots,p_m$ primos distintos (y menores que $i$) y un natural $b\in\mathbb{N}$ tal que $$\frac{1}{i}=\frac{1}{p_1\ldots p_m\cdot b^2}=\left(\frac{1}{p_1\ldots p_m}\right)\left(\frac{1}{b^2}\right).$$
	
	Por otro lado, si desarrollamos el miembro de la derecha en (\ref{desigualdad-series}) y denotamos por $p_k$ al mayor primo menor que $n$, tendremos que:
	
	 $$
	\left(\prod_{p\leq n}\left(1+\frac{1}{p}\right)\right)\left(\sum_{i=1}^n\frac{1}{i^2}\right)=
	$$ $$ \left(1+\frac{1}{p_1}+\ldots+\frac{1}{p_k}+\frac{1}{p_1p_2}+\ldots\frac{1}{p_1p_k}+\ldots+\frac{1}{p_1\ldots p_k}\right)\left(1+\frac{1}{2^2}+\ldots+\frac{1}{n^2}\right)$$
	
	y de aquí se deduce trivialmente la desigualdad (\ref{desigualdad-series}), pues cada término de la forma $\frac{1}{i}=\left(\frac{1}{p_1\ldots p_m}\right)\left(\frac{1}{b^2}\right)$ pertenece a la anterior suma cuando desarrollamos ese producto.
	
	Es bien sabido que $$\sum_{i=1}^\infty\frac{1}{i^2}=\frac{\pi^2}{6},$$ y por ello (sería suficiente para el argumento con saber que la anterior serie es convergente) se cumple para cada $n\in\mathbb{N}$ que $$\left(\prod_{p\leq n}\left(1+\frac{1}{p}\right)\right) \geq \frac{6}{\pi^2} \left(\sum_{i=1}^n\frac{1}{i}\right), $$ lo cual implica por el Criterio de Comparación y la divergencia de la serie armónica que: $$ \prod_{p\leq n }\left(1+\frac{1}{p}\right)\rightarrow\infty \quad\mbox{ cuando }\quad n\rightarrow\infty.$$
	
	Ahora bien, puesto que: $$\prod_{p\leq n}\left(1+\frac{1}{p}\right) \rightarrow \infty\ \  \mbox{ si y sólo si }\ \ \prod_{p\leq n}\left(1+\frac{1}{p}\right)^{-1}=\prod_{p\leq n}\left(1-\frac{1}{p}\right) \rightarrow 0,  $$
	
	como consecuencia de la Proposición \ref{prop-prod} deducimos que  $$\sum_{p\ primo}\frac{1}{p}=\infty.$$
\end{proof}
\begin{cor}\label{span-primos}
	Se tiene que $span_{C([0,1],\mathbb{C})}\{1, x^p \mid p \mbox{ es un primo } \}$ es denso en $C([0,1],\mathbb{C})$
\end{cor}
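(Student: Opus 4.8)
El plan es realizar el conjunto $\{1, x^p \mid p \mbox{ primo}\}$ como una familia de monomios $\{x^{\lambda_i}\}_{i\geq0}$ que satisfaga las hipótesis del Teorema de Müntz-Szász, y verificar el criterio de divergencia mediante el Teorema de Euler. En primer lugar ordenaría los exponentes tomando $\lambda_0:=0$ (correspondiente a la función constante $1=x^0$) y $\lambda_1<\lambda_2<\lambda_3<\ldots$ iguales a la sucesión creciente de los números primos $2<3<5<7<\ldots$. Con esta ordenación se tiene la identidad $span\{1,x^p\mid p \mbox{ primo}\}=span\{x^{\lambda_i}\mid i\geq0\}$, de modo que el problema queda reducido a un caso particular del teorema principal.

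A continuación comprobaría las hipótesis técnicas del Teorema de Müntz-Szász: la sucesión comienza en $\lambda_0=0$, es estrictamente creciente por construcción, y puesto que hay infinitos primos se cumple que $\lim_{j\rightarrow\infty}\lambda_j=\infty$. El único requisito restante es la condición sobre la serie $\sum_{i\geq0}\frac1{\lambda_i}=\infty$. Prescindiendo del término $i=0$, esta serie coincide exactamente con la serie de los recíprocos de los primos $\sum_{p\ primo}\frac1p$, cuya divergencia es precisamente el contenido del Teorema de Euler (\ref{Euler}), ya probado en esta sección.

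Verificadas todas las hipótesis, el Teorema de Müntz-Szász proporciona de forma inmediata que $span_{C([0,1],\mathbb{C})}\{x^{\lambda_i}\mid i\geq0\}$ es denso en $C([0,1],\mathbb{C})$, que es la afirmación buscada. Espero que el argumento sea esencialmente inmediato, pues no hay un verdadero obstáculo: el único ingrediente no trivial, a saber, la divergencia de $\sum 1/p$, ya ha sido establecido. El único punto que requiere un momento de cuidado es la interpretación del criterio de divergencia en presencia del término $\lambda_0=0$, para el cual basta omitir ese único término (que no altera la convergencia o divergencia de la cola) e identificar la serie restante con la serie de Euler sobre los primos.
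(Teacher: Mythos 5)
Tu propuesta es correcta y sigue esencialmente el mismo camino que el trabajo: identificar los exponentes $\{0\}\cup\{p \mid p \mbox{ primo}\}$ como una sucesión admisible para el Teorema de Müntz-Szász y obtener la divergencia de $\sum 1/\lambda_i$ del Teorema de Euler (\ref{Euler}). El trabajo lo despacha en una línea como ``consecuencia directa'' de ambos teoremas; tu observación sobre omitir el término $\lambda_0=0$ en la serie es un detalle de cuidado razonable que no altera el argumento.
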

\begin{proof}[Demostración]
	Es una consecuencia directa del Teorema de Müntz-Szász y del  Teorema \ref{Euler}.
\end{proof}
\begin{cor}
	Si $f\in C([0,1],\mathbb{C})$ cumple que $$\int_{0}^{1}f(x)x^p\ dx =0$$ para cada número primo $p$, entonces $f\equiv 0$.
\end{cor}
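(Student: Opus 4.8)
The plan is to read the hypothesis as the vanishing of a continuous linear functional and then exploit density. Define $L\colon C([0,1],\mathbb{C})\to\mathbb{C}$ by $L(g):=\int_0^1 f(x)g(x)\,dx$. This functional is continuous for the supremum norm, since $|L(g)|\le \|g\|_\infty\int_0^1|f(x)|\,dx\le \|f\|_\infty\,\|g\|_\infty$. By hypothesis $L(x^p)=0$ for every primo $p$, and by linearity $L$ anula cualquier combinación de los monomios $x^p$; el único término que escapa a este control es el término constante.

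First I would prove the intermediate claim that $L(g)=0$ para toda $g\in C([0,1],\mathbb{C})$ con $g(0)=0$. Por el Corolario \ref{span-primos}, una tal $g$ es límite uniforme de polinomios de la forma $P_n=c_0^{(n)}+\sum_i c_i^{(n)}x^{p_i}$. Evaluando en $x=0$ y usando que $0^{p_i}=0$ se obtiene $c_0^{(n)}=P_n(0)\to g(0)=0$. Entonces, como $L(x^{p_i})=0$, resulta que $L(P_n)=c_0^{(n)}\int_0^1 f(x)\,dx\to 0$, y por la continuidad de $L$ concluimos $L(g)=\lim_n L(P_n)=0$.

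Finally I would apply the claim to the test function $g(x):=x\,\overline{f(x)}$, que es continua y cumple $g(0)=0$. Esto da
$$0=L(g)=\int_0^1 x\,|f(x)|^2\,dx,$$
y como el integrando es continuo y no negativo, debe anularse idénticamente; por tanto $f(x)=0$ para $x\in(0,1]$ y, por continuidad, también $f(0)=0$, de donde $f\equiv 0$. The main obstacle is precisely the handling of the constant function: puesto que el exponente $0$ no es primo, la hipótesis no permite deducir directamente que $L$ se anule en todo $C([0,1],\mathbb{C})$, y el valor $\int_0^1 f$ queda a priori indeterminado. El punto delicado es que al evaluar en $x=0$ los polinomios aproximantes se fuerza $c_0^{(n)}\to 0$, lo que neutraliza esa contribución desconocida y permite restringir $L$ al hiperplano $\{g:g(0)=0\}$, donde la elección $g=x\overline f$ cierra el argumento.
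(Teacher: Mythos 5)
Your proof is correct, and although it follows the same general strategy as the paper's own argument --- invoke Corollary \ref{span-primos} and pass a limit under the integral sign --- it deviates at two points, and at both of them your version is the more careful one. First, the paper approximates $f$ itself uniformly by polynomials $P_n(x)=\sum_i a_i x^{p_i}$ whose exponents are all prime, with no constant term; but a uniform limit of such polynomials necessarily vanishes at $x=0$, so that step is only legitimate when $f(0)=0$, which is not known a priori (the dense set of Corollary \ref{span-primos} contains the constant $1$, and the hypothesis gives no control whatsoever on $\int_0^1 f\,dx$). Your device of evaluating the approximants at $x=0$ to force $c_0^{(n)}\to 0$ --- equivalently, restricting the functional $L$ to the closed hyperplane $\{g \mid g(0)=0\}$, which is precisely the uniform closure of $span\{x^p \mid p \mbox{ primo}\}$ --- is exactly what is needed to bypass this. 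Second, the paper concludes from $\int_0^1 [f(x)]^2\,dx=0$ that $f\equiv 0$; this is valid for real-valued $f$ but fails for complex-valued $f$ (for instance $f(x)=e^{i\pi x}$ satisfies $\int_0^1 f(x)^2\,dx=0$), whereas your test function $g(x)=x\,\overline{f(x)}$ produces the genuinely non-negative integrand $x\,|f(x)|^2$, which settles the complex case as stated. The factor $x$ costs nothing: positivity and continuity force $f=0$ on $(0,1]$, and continuity at the origin gives $f(0)=0$ as well. In short, your functional-analytic rewriting buys rigor at exactly the two points where the paper's shorter argument is careless.
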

\begin{proof}[Demostración]
	
	Por el Corolario \ref{span-primos}, podemos encontrar una sucesión $(P_n)_{n=1}^\infty$ de polinomios formados con monomios cuyos exponentes son números primos tales que $P_n\rightarrow f$ uniformemente en $[0,1]$.
	
	Entonces $ (f(x)\cdot P_n(x))_{n=1}^\infty$ converge uniformemente a $[f(x)]^2$ en $[0,1]$.
	
	Ahora, si $P_n(x)=\sum_{i=i_0}^{n}a_ix^{p_i}$ para ciertos primos $p_i$ entonces por nuestra hipótesis $$\int_{0}^{1}f(x)P_n(x)\ dx =\sum_{i=i_0}^{n}a_i\int_{0}^{1}f(x)x^{p_i}\ dx =0$$ y como la convergencia de $ (f(x)\cdot P_n(x))_{n=1}^\infty$ es uniforme podemos intercambiar el límite y la integral y obtener que $$\int_{0}^{1}[f(x)]^2\ dx =\lim\limits_{n\rightarrow \infty}\int_{0}^{1}f(x)P_n(x)\ dx =0,$$ lo cual por la continuidad de $f$ implica que $f\equiv 0$.
	
\end{proof}

\section{Extensiones y generalizaciones}
La versión original del Teorema de Müntz puede resultar incompleta en cuanto a que hay restricciones sobre el conjunto de exponentes $\{\lambda_0\!=0\!<\lambda_1<\lambda_2<\ldots\}$. En 1996 Peter Borwein y Tamás Erdélyi probaron una versión de este teorema, ahora conocida como "Teorema de Müntz-Szász completo", en la cual no se considera restricción alguna sobre la sucesión de exponentes: 

\begin{teo}
	Sea $\{\lambda_n\}_{n=0}^\infty$ una sucesión de números reales positivos distintos entre sí. Entonces, $\smash{span_{C([0,1],\mathbb{C})}\{x^{\lambda_i}\mid i\geq0 \}}$ es denso en $C([0,1],\mathbb{C})$ si y sólo si $$\sum_{i=0}^\infty \frac{\lambda_i}{\lambda_i^2+1}=\infty.$$
\end{teo}
Además, en el mismo trabajo probaron versiones del Teorema de Müntz-Szász para los espacios $L^p[0,1]$. 

\vspace{1cm}

Comentamos que como consecuencia del Teorema de Weierstrass (\ref{weierstrass}) toda función $f\in C([0,1],\mathbb{C})$ puede ser aproximada uniformemente por polinomios con coeficientes racionales. Resulta natural pues, preguntarse si lo mismo sigue siendo cierto cuando consideramos polinomios con coeficiente enteros, y en caso de que no sea cierto qué condiciones adicionales tenemos que suponer que cumple la función $f$.

Consideramos el siguiente subespacio cerrado de $C([0,1],\mathbb{C})$: $$C_0([0,1],\mathbb{C}):=\{f\in C([0,1],\mathbb{C})\mid f(0),f(1)\in\mathbb{Z} \}.$$

En 1914 Kakeya probó que una función continua $f:[0,1]\rightarrow\mathbb{C}$ puede ser aproximada por polinomios con coeficientes enteros si y sólo si $f\in C_0([0,1],\mathbb{C})$. 

En 1975 Le Baron O. Ferguson y Manfred Von Golitschek combinaron los trabajos de Kakeya y de Müntz para probar los siguientes resultados:

\begin{teo}
	Sea $\{\lambda_0\!=0\!<\lambda_1<\lambda_2<\ldots\}$ una sucesión de números reales tales que $lim_{j\rightarrow\infty}\lambda_j=\infty$.
	Si $$\sum_{i=0}^\infty \frac1{\lambda_i}=\infty,$$ entonces toda función continua $f\in C_0([0,1],\mathbb{C})$ puede ser aproximada uniformemente por polinomios con coeficientes enteros de la forma: $$p(x)=\sum_{i=0}^{n}a_i x^{\lambda_i} \quad  (a_i\in\mathbb{Z}).$$

\end{teo}
\begin{teo}
	Sea $\{\lambda_i \}_{i=0}^\infty$ una sucesión de números reales positivos que posee un punto de acumulación en $x_0\in\mathbb{R}^+$, entonces toda función continua $f\in C_0([0,1],\mathbb{C})$ puede ser aproximada uniformemente por polinomios con coeficientes enteros de la forma: $$p(x)=\sum_{i=0}^{n}a_i x^{\lambda_i} \quad  (a_i\in\mathbb{Z}).$$
\end{teo}


\begin{thebibliography}{X}
	\bibitem{Rudin}Rudin, Walter. Principles of Mathematical Analysis. 3rd ed. McGraw-Hill 1976.
	
	\bibitem{Borwein}Borwein, Peter and Erdélyi, Tamás. "The full Müntz theorem  in $C[0,1]$ and $L^1[0,1]$". In: Journal ofthe London Mathematical Society. Second Series 54.1 (1996), pp. 102–110. ISSN: 0024-6107.
	
	\bibitem{Takeshi} Takeshi Kano. On Approximation by Integral Miintz Polynomials.
	
	Department of Mathematics, Okayama University
	(Sept. 12, 1983)
	\bibitem{FERGUSON} Le Baron O. Ferguson and Manfred Von Golitschek. 
	Müntz-Szász theorem with integral coefficients. II (1975)
	
	\bibitem{M}Müntz, Ch. H. (1914). "Über den Approximationssatz von Weierstrass". H. A. Schwarz's Festschrift. Berlin. pp. 303–312.
	\bibitem{S}Szász, O. (1916). "Über die Approximation stetiger Funktionen durch lineare Aggregate von Potenzen". Math. Ann. 77: 482–496.
	
\end{thebibliography}
\end{document}